\documentclass[a4paper]{article}
\usepackage[english]{babel}
\usepackage[utf8x]{inputenc}
\usepackage[T1]{fontenc}

\usepackage[a4paper,top=3cm,bottom=2cm,left=3cm,right=3cm,marginparwidth=1.75cm]{geometry}

\usepackage{graphicx, amsmath, amssymb, amsthm, amscd}
\usepackage{color}
\usepackage{epsf}
\usepackage{enumerate}

\usepackage{amsmath}
\usepackage{graphicx}
\usepackage[colorinlistoftodos]{todonotes}
\usepackage[colorlinks=true, allcolors=blue]{hyperref}

\newtheorem{theorem}{Theorem}[section]
\newtheorem{lemma}[theorem]{Lemma}
\newtheorem{proposition}[theorem]{Proposition}
\newtheorem{claim}[theorem]{Claim}

\newtheorem{question}{Question}
\newtheorem{example}{Example}
\newtheorem{conjecture}{Conjecture}
\newtheorem{remark}{Remark}
\DeclareMathOperator{\cl}{cl}

\title{Continuously homogeneous hereditarily indecomposable continua are tree-like}
\begin{document}
\maketitle

\begin{center}Jan P. Boro\'nski\footnote{Faculty of Mathematics and Computer Science, Jagiellonian University in Krak\'ow,
ul. Łojasiewicza 6, 30-348 Kraków, Poland, e-mail: jan.boronski@uj.edu.pl}$^,$\footnote{National Supercomputing Centre IT4Innovations, Division of the University of Ostrava, Institute for Research and Applications of Fuzzy Modeling, 30. dubna 22, 70103 Ostrava, Czech Republic, e-mail: jan.boronski@osu.cz}, David R. Prier\footnote{Gannon University, Mathematics Department, 109 University Square, Erie, PA 16541, USA, e-mail: prier001@gannon.edu}, Michel Smith\footnote{Department of Mathematics and Statistics, Auburn University, Auburn, AL 36849, United States, e-mail: smith01@auburn.edu } and Frank Sturm\footnote{Sadly, Frank passed away on Oct. 14, 2014, but the research started in the Spring of 2013, when he and the first author worked at the Department of Mathematics and Statistics at Auburn University; see \cite{Frank}}
\end{center}
\begin{abstract}
A topological space $X$ is continuously homogeneous if for any $x,y\in X$ there exists a continuous surjection $f:X\to X$ with $f(x)=y$. We show that continuously homogeneous hereditarily indecomposable continua are tree-like, therefore, extending results of Bing and Rogers for homeomorphism and a result of Sturm for the pseudo-circle and pseudo-solenoids. This also provides a partial answer to the question of Lewis whether all continuously homogeneous hereditarily indecomposable continua are homogeneous.
\end{abstract}

\section{Introduction}
A {\it continuum} is a compact and connected metric space. A continuum $X$ is {\it continuously homogeneous} if for any $x,y\in X$ there exists a continuous surjection $f:X\to X$ with $f(x)=y$. If for any such $x$ and $y$ a surjection $f$ as above can be found that is also injective, then $X$ is said to be {\it homogeneous}. Homogeneous planar compacta and continua have recently been classified by Hoehn and Oversteegen \cite{Hoehn}. According to their result, any such continuum is either a circle, pseudo-arc or circle of pseudo-arcs. The result for compacta follows by taking a Cartesian product of one of the above with a finite set or Cantor set. The problem of classifying topologically homogeneous compacta remains a major open problem in topology. Because of the central role that the hereditarily indecomposable pseudo-arc played in the classification of planar homogeneous compacta, it is of high relevance to better understand the interplay between the notions of homogeneity, with its generalizations, and hereditary indecomposability. In this vein, the present paper is motivated by the following open problem (Question 12, \cite{Lewis}).
\begin{question}( \cite{Lewis})
Does there exist a hereditarily indecomposable continuum that is not homogeneous, which is homogeneous with respect to continuous surjections. 
\end{question}
The most natural candidates for such a continuum are continua with dimension greater than one, or those that have a nontrivial shape, since by a result of Rogers none of them is homogeneous \cite{Rogers3}. Here, however, we show that neither of them is continuously homogeneous either.
\begin{theorem}\label{main}
Suppose $X$ is a continuously homogeneous hereditarily indecomposable continuum. Then $X$ is tree-like. 
\end{theorem}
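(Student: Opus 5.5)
Recall that a continuum is tree-like iff it is one-dimensional and has trivial shape, i.e. every map to a polyhedron is nullhomotopic on every one-dimensional factor; for one-dimensional continua this means all maps to $S^1$ are nullhomotopic ($H^1(X)=0$ in Čech cohomology) and more generally every map into a graph is inessential. So the plan splits into two parts. First, show that $X$ is one-dimensional. Second, show that every map $X\to S^1$ is nullhomotopic, which for one-dimensional continua is equivalent to tree-likeness via a Case--Chamberlin type characterization (maps into all graphs being inessential). In both parts the strategy follows Bing and Rogers: a \emph{generic} point of $X$ is shown to have a very small, "trivial" local structure, and continuous homogeneity is used to transport this structure to every point.

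For dimension: by Bing's theorem every hereditarily indecomposable continuum of dimension $n\ge 2$ contains a point whose composant structure differs from the rest. More concretely, in a hereditarily indecomposable $X$ with $\dim X\ge 2$ there is a point $p$ such that every subcontinuum containing $p$ is at least $2$-dimensional, and also a point $q$ lying in a nondegenerate one-dimensional subcontinuum (Bing's result that such continua contain one-dimensional subcontinua). Take $f:X\to X$ surjective with $f(p)=q$, or the reverse. The key tool here is the standard fact that for a surjection between hereditarily indecomposable continua, images of small subcontinua through a point are subcontinua through the image, and preimages decompose into layers. I would try to show that if $K\ni q$ is a one-dimensional nondegenerate subcontinuum, then the component of $f^{-1}(K)$ containing $p$ maps \emph{onto} $K$. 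This uses hereditary indecomposability: for a surjection, a component of the preimage of a subcontinuum maps onto it, by the boundary bumping argument combined with the linear ordering of subcontinua through a point. One then needs a dimension-lowering or raising contradiction, and monotone maps alone are not enough. Instead I would use the Krasinkiewicz--Minc style characterization: $X$ is hereditarily indecomposable iff for disjoint closed sets $A,B$ and open $U\supset A$, $V\supset B$ there exist disjoint closed $X_0\ni A$, $X_1\ni B$ with $X_0\cup X_1\cup X_2=X$ in the crooked-partition sense. This lets me pull back $n$-dimensional essential maps to cubes, via Bing's crookedness, along $f$ near $p$.

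For trivial shape, the central step is to take an essential map $g:X\to S^1$ and study the map $g\circ f$. Rogers' argument for homogeneous continua uses the fact that near some point the continuum is "locally" a tree-like subcontinuum, since proper subcontinua of a hereditarily indecomposable continuum are nowhere dense. Continuous homogeneity gives maps of the whole space moving points, so I would instead use Sturm's approach for pseudo-solenoids. Fix a composant, find a proper subcontinuum $K$ on which $g$ is nullhomotopic, and push it via surjections covering all of $X$. Compactness lets us pick finitely many surjections $f_1,\dots,f_m$ and proper subcontinua $K_i$ with $f_i(K_i)$ interiors covering $X$. Since subcontinua of $X$ through a point are nested, the images can be arranged in the chain pattern of a circle only if $g$ winds. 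I would derive a contradiction by showing that a hereditarily indecomposable continuum covered by continuum images with the nesting property admits a lift to $\mathbb{R}$.

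The main obstacle is exactly this last step: continuous surjections, unlike homeomorphisms, need not preserve essentiality of maps to $S^1$, so $g\circ f$ may be inessential even when $g$ is essential. I would first try to show that a surjection of a hereditarily indecomposable continuum onto itself induces a nonzero endomorphism on $\check H^1(X)$ restricted to suitable subcontinua. Failing that, I would use the sets $\{x: g|_{C}\text{ is essential for every subcontinuum } C\ni x \text{ with } \operatorname{diam} C>\varepsilon\}$ and show that these are mapped into themselves by every surjection, giving a non-dense proper invariant set. That would contradict continuous homogeneity.
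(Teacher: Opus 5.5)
Your plan has a genuine gap exactly where the work lies: nothing in your second step turns an essential map $g\colon X\to\mathbb{S}^1$ into a contradiction. The Bing--Rogers template of transporting structure from a generic point does not survive the passage to mere surjections, and the concrete steps you propose do not work.

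The covering step is ill-posed. $X$ is indecomposable, so proper subcontinua are nowhere dense, and $f_i(K_i)$ has nonempty interior only if $f_i(K_i)=X$. That leaves the ``chain pattern'' and the ``lift to $\mathbb{R}$'' nothing to act on. Your candidate invariant set $\{x: g|_C \text{ essential for every } C\ni x,\ \operatorname{diam}C>\varepsilon\}$ has no reason to be preserved either, since $g\circ f$ bears no relation to $g$ and $f$ can shrink continua.

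You do name the right tool: a confluent surjection is injective on $\check H^1$ (Lelek), and maps onto hereditarily indecomposable continua are confluent. The missing idea is how to use it.

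\begin{itemize}
\item The paper embeds $X$ essentially in $\mathbb{S}^1\times I^\infty$ and fixes a subcontinuum $Z$ \emph{minimal} with $\check H^1(Z)\neq 0$.
\item By Lemma~\ref{tnont}, no proper subcontinuum of $Z$ maps onto $Z$. From this, together with the lifting below, one gets $f(Z)=Z$ for any surjection $f$ with $f(Z)\cap Z\ne\emptyset$, and $f$ sends composants of $Z$ into composants of $Z$.
\item It lifts $f$ to $\Sigma$ with $\tilde f(\infty)=\infty$. It proves that the one-point compactification $K_\infty$ of a component $K$ of $\tau^{-1}(Z)$ is indecomposable (Lemma~\ref{component}).
\item It picks $x$ from the projection of the composant of $\infty$ and $y$ from another composant. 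A lift with $\tilde f(\tilde x)=\tilde y$ must then map a proper subcontinuum through $\infty$ and $\tilde x$ onto all of $K_\infty$. Hence the image of the composant of $x$ meets two composants of $Z$, a contradiction.
\end{itemize}

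Nothing in your outline plays the role of $Z$, of the universal cover, or of the distinguished composant of $\infty$.

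Your reductions are also problematic. For one-dimensional continua, inessentiality of all maps to $\mathbb{S}^1$ is not equivalent to tree-likeness: the Case--Chamberlin curve is acyclic but not tree-like. So proving $\check H^1(X)=0$ would not finish. You need a fact special to hereditarily indecomposable continua that converts non-tree-likeness into an essential map $X\to\mathbb{S}^1$. The paper proceeds this way, and for $\dim X\ge 2$ it gets the essential circle map from Krasinkiewicz. Your separate dimension step is therefore unnecessary: one criterion, Lemma~\ref{cfcn}, handles every case.

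As sketched, that dimension step would fail anyway. Surjections between hereditarily indecomposable continua (automatically confluent) can raise as well as lower dimension, since every continuum is a continuous image of a one-dimensional hereditarily indecomposable continuum. So neither $f(p)=q$ nor $f(q)=p$ contradicts anything:
\begin{itemize}
\item the component of $f^{-1}(K)$ through $p$ can be $2$-dimensional and map onto the $1$-dimensional $K$;
\item $f(K)$ can be a point or higher-dimensional.
\end{itemize}
For the same reason, essential maps onto cubes do not pull back along $f$.
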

Theorem \ref{main} extends the result of Rogers \cite{Rogers3} who showed that every homogeneous hereditarily indecomposable continuum is tree-like. Earlier, Bing \cite{Bing} showed that no $n$-dimensional ($2\leq n<\infty$) hereditarily indecomposable continuum is homogeneous. Sturm \cite{Sturm} showed that pseudo-circle and pseudo-solenoids are not continuously homogeneous. In \cite{KuperbergGammon} K. Kuperberg and Gammon gave a short proof of the nonhomogeneity of the pseudo-circle. Generalizing their approach to higher dimensions and noninjective maps, and combining it with some ideas of Rogers from \cite{Rogers2}, \cite{Rogers}, in Lemma \ref{cfcn} we give a new criterion for continuous nonhomogeneity. This criterion is very handy for hereditarily indecomposable continua that are not tree-like, but we foresee further applications (with suitable adjustments) in the future to other indecomposable continua. This is summarized in Conjecture \ref{fingen}, pertaining to finitely generated continua. Such a new criterion was needed in the present work, since most proofs of nonhomogeneity of hereditarily indecomposable continua take advantage of Effros Theorem \cite{Effros}, stating that a Polish group $G$ acting transitively on a Polish space $X$ acts on $X$ micro-transitively. However, since to prove results on continuous homogeneity we deal no longer with homeomorphisms but merely continuous surjections, the use of the Effros Theorem is no longer a valid option. The notion of generalized homogeneity of continua, where instead of the action of the group of homeomorphisms on a space, one considers the action of semigroups of certain classes of surjections of the space, was first introduced by J. Charatonik and Bellamy in the late 1970s. Charatonik showed that if $X$ is a circle-like continuum, then $X$ is a solenoid if and only if $X$ is homogeneous with respect to the class of open mappings and each subcontinuum of $X$ is an arc. He asked whether homogeneity with respect to the class of open maps is equivalent to homogeneity for continua \cite{Charatonik1}, \cite{Charatonik2}. This question was independently answered in the negative by Prajs and Seaquist, with their construction of a continuous decomposition of the Sierpi\'nski Carpet into pseudo-arcs \cite{PrajsO1}, \cite{PrajsO2}, \cite{Seaquist1}, \cite{Seaquist2}. Charatonik conjectured that the pseudo-circle might provide another counterexample, the conjecture that was eventually disproved by Sturm, as already mentioned. Earlier, in 1979 Krupski \cite{Krupski1} showed that the class of continuously homogeneous continua contains all Peano continua and arbitrary cones, but the class does not include non-locally connected real or half-ray curves, non-locally connected chainable continua with finitely many arc components, and non-arcwise connected $\lambda$-dendroids with finitely many arc components. 

A connected set $X$ is said to be {\itshape tree-like} if for each $\epsilon>0$ there exists a tree $T$, and a map $f:X\to T$ such that $\operatorname{diam}(f^{-1}(t))<\epsilon$ for each $t\in T$. If $T$ is an arc for each $\epsilon$, then $X$ is said to be {\itshape arc-like}. A continuum $X$ is said to be {\itshape indecomposable} if given subcontinua $A,B\subset X$ such that $A\cup B=X$ we must have $A=X$ or $B=X$. A connected set is said to be {\itshape hereditarily indecomposable} if each subcontinuum is indecomposable. Every continuum is the continuous image of a one-dimensional hereditarily indecomposable continuum \cite{Bellamy}, \cite{Bellamy2}, \cite{HartMillPol}, \cite{MackowiakTymchatyn}. The best known example of a hereditarily indecomposable continuum is a {\itshape pseudo-arc}, first constructed by Knaster in 1922 \cite{Knaster}, which is characterized as the unique hereditarily indecomposable arc-like continuum \cite{Bing59}. Recently, it was also characterized as the only, other than the arc, planar continuum homeomorphic to each of its proper subcontinua \cite{HoehnOversteegen19}, \cite{Moise}. The homogeneity of the pseudo-arc was first proven by Bing \cite{Bing48}. He also showed that most continua are pseudo-arcs, in the sense that for any Euclidean space or Hilbert space $\mathbb{E}$, of dimension at least $2$, the collection of subcontinua of $\mathbb{E}$ which are pseudo-arcs is a dense $G_\delta$ in the hyperspace of subcontinua of $\mathbb{E}$ \cite{BingPacific}. In dynamics, pseudo-arcs appear in the closure of the unstable manifold of dissipative saddle periodic points of arbitrary small $C^r$-perturbations of any $C^r$-diffeomorphism of a 2-manifold that exhibits a homoclinic tangency \cite{Martensen}. Also, there exists a dense $G_\delta$ subset of the set of Lebesgue measure-preserving interval maps $f$ such that the inverse limit of the map is the pseudo-arc \cite{CO}. Besides the pseudo-arc and topology, other hereditarily indecomposable continua also appear in other areas of mathematics. This includes smooth dynamics \cite{Ha}, \cite{KennedyYorke1}, \cite{KennedyYorke2}, \cite{KennedyYorke3}, complex dynamics \cite{Cheritat}, \cite{Herman}, \cite{Rempe-Gillen}, topological dynamics \cite{COJDE},\cite{Minc}, rotation theory \cite{BCJ}, \cite{BO}, isometric theory of Banach spaces \cite{BS}, \cite{Kawamura}, \cite{Rambla}, and Fra\"ise limit theory \cite{Kubis}, \cite{IrwinSolecki}, \cite{Kwiatkowska}.

\section{Hereditarily indecomposable continua}
Given a set $A\subseteq X$ by $\cl_X(A)$ we denote the closure of $A$ in $X$. If $X$ is clear from the context, then we shall just write $\cl(A)$. By $\mathbb{S}^1$ we shall denote the unit circle and $I^\infty$ will stand for the Hilbert cube. We set $\Sigma=\mathbb{R}\times I^\infty$ for the universal covering space of $\mathbb{S}^1\times I^\infty$, with the covering map $\tau$. By $\Sigma_\infty$ we shall denote the one-point compactification of $\Sigma$ by the point $\infty$. For the following fact, see, for example \cite{Hu}, Theorem 16.3.
\newline 
{\bfseries Lemma} [Map Lifting Property]
{\itshape Suppose $X$ is a topological space and $(\tilde{X},\tau)$ is its universal covering space, then given a map $f:X\to X$ there exists a {\itshape lift} $g:\tilde{X}\to \tilde{X}$ such that the following diagram commutes.
\[
\begin{CD}
\tilde{X}@>g>> \tilde{X} \\
@V\tau VV @V\tau VV \\
X @>f>>X
\end{CD}
\]
Additionally, if $f(x)=y$ then $g$ is uniquely determined by the choice of two points $\tilde{x}\in\tau^{-1}(x)$, $\tilde{y}\in\tau^{-1}(y)$ and the setting $g(\tilde{x})=\tilde{y}$\footnote{So the number of such lifts $g$ is related to the number of sheets in the covering space - in fact the number of such lifts is equal to the number of elements in the deck transformation group.}}

Recall that given a continuum $Z$ and a point $z\in Z$, the {\itshape composant} of $z$ in $Z$ is the set of all points $z'\in Z$ for which there exists a proper subcontinuum $Z'\subset Z$ such that $z,z'\in Z'$. \begin{remark}
We shall apply the same definition of a composant to connected sets, even if they are not compact.
\end{remark} A map $f:X\to Y$ is said to be {\it confluent} if for any continuum $K\subset Y$ and any component $C$ of $f^{-1}(K)$ we have $f(C)=K$. Note that any homeomorphism is confluent. Any map onto a hereditarily indecomposable continuum is confluent \cite[Lemma 15]{RogersIll}. 
Even though Bellamy and Lewis showed that the two-point compactification of the universal cover of the pseudo-circle is the pseudo-arc, and thus the cover itself is connected, it is easy to give examples of hereditarily indecomposable continua whose universal covers are not connected. One such an example is a hereditarily indecomposable version of a ray limiting to a circle, in which the universal cover consists of countably many arc-like components. 
\begin{example}\label{ex1}
Let $O$ be the one-point union of $S^1$ and an arc $A=[a,b]$, with midpoint $c$, such that $S^1\cap A=\{a\}$. Let $\varphi:O\to O$ be a piecewise linear map such that 
\begin{itemize}
\item $\varphi(c)=a$, 
\item $\varphi(a)=a$, $\varphi(b)=b$, $\varphi([a,c])=\mathbb{S}^1$, 
\item $\varphi(\mathbb{S}^1)=\mathbb{S}^1$, and 
\item $\varphi|\mathbb{S}^1$ topologically exact.
\end{itemize} 
By Theorem 14 in \cite{KOT}, $\varphi$ can be chosen so that $X=\lim_{\leftarrow}\{O,\varphi\}$ is hereditarily indecomposable. Then the universal cover $\tilde{X}$ of $X$ consists of a countable number of pairwise disjoint hereditarily indecomposable arc-like components, one of which is $\mathcal{C}$, the universal cover of the pseudo-circle $P=\lim_{\leftarrow}\{\mathbb{S}^1,\varphi|\mathbb{S}^1\}$, and all other $\{\mathcal{K}_n:n\in\mathbb{N}\}$ cover the "pseudo-spiral" around $P$. Therefore both the two-point compactification, and one-point compactification of $\tilde{X}$ are decomposable continua. This continuum is not continuously homogeneous, by the result of Sturm stating that the pseudo-circle in not continuously homogeneous \cite{Frank}, and the fact that the pseudo-circle is invariant under any surjective self-map $f$ of $X$ (since for any lift $\tilde{f}$ of $f$ one must have $\tilde{f}(\mathcal{C})\cap \tilde{f}(\mathcal{K}_n)=\emptyset$ for all $n$). 
\end{example}
\begin{example}\label{bco}
In \cite{BCO2} Clark, Oprocha and the first author constructed a family $\Theta$ of one-dimensional hereditarily indecomposable continua, by a so-called  pseudo-suspension method. The spaces resemble mapping tori of Cantor set homeomorphisms, with each arc-component replaced by the universal cover $\mathcal{C}$ of the pseudo-circle, and embed essentially into the solid torus $\mathbb{S}^1\times [0,1]^2$. A universal cover of each such continuum consists of uncountably many copies of $\mathcal{C}$. Consequently, each component of such a universal cover has an indecomposable one-point compactification homeomorphic to a pseudo-arc with two points from two distinct composants identified (see \cite{BL} and \cite[Remark, p.1151]{KuperbergGammon}). Since each subcontinuum of $\mathcal{C}$ is a pseudo-arc, which is acyclic, the covering map is injective on each such subcontinuum. Because every $X\in \Theta$ is hereditarily indecomposable, $X$ is not continuously homeogeneous by Lemma \ref{cfcn}. 
\end{example}
The examples above suggest that if we consider one component of the universal cover at a time, then the one-point compactification is an indecomposable continuum. We confirm this conjecture in the following lemma. 
\begin{lemma}\label{component}
Suppose $X$ is a hereditarily indecomposable continuum that embeds essentially in $\mathbb{S}^1\times I^\infty$. Then the closure of any component of $\tau^{-1}(X)$ in $\Sigma_\infty$ is indecomposable.
\end{lemma}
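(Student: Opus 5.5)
Let $C$ be a component of $\tau^{-1}(X)$ and $K=\cl_{\Sigma_\infty}(C)$. The plan is to show that $C$ is unbounded in $\Sigma$ and closed in $\Sigma$, so that $K=C\cup\{\infty\}$ is a continuum. Then we rule out a decomposition $K=A\cup B$ into two proper subcontinua by exploiting hereditary indecomposability of $X$ through the covering map.

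The first step is to show $C$ is unbounded. Suppose $C$ were bounded. Since $\tau^{-1}(X)$ is closed in $\Sigma$ and locally compact, a bounded component is compact. Then $\tau(C)$ is a subcontinuum of $X$. Because $X$ embeds essentially, the deck translations $T_n$ of $\Sigma$ (shifts by $n$ in the $\mathbb{R}$-coordinate) permute the components. Using boundary bumping in the locally compact space $\tau^{-1}(X)$ together with compactness, a bounded component would force $X$ to be covered by finitely many disjoint translates of compact lifts. That would make $\tau|$ a trivial covering over $X$, so $X$ would lift homeomorphically into $\Sigma$ and would be inessential, contradicting the hypothesis. A cleaner version of this step: if the lifts of $X$ were all bounded, then the map $X\to\mathbb{S}^1$ (projection) would lift to $\mathbb{R}$, i.e.\ it would be null-homotopic, which is exactly inessentiality. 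I will phrase it via Čech cohomology $H^1(X)\ne 0$ witnessed by the projection. Hence every component is unbounded, and boundary bumping in $\Sigma_\infty$ gives that $K=C\cup\{\infty\}$ is a continuum.

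The second step is indecomposability. Suppose $K=A\cup B$ with $A,B$ proper subcontinua. One of them, say $A$, contains $\infty$. Otherwise both $A$ and $B$ are compact subsets of $C$, so $C$ would be compact, contradicting unboundedness. Pick a point $p\in K\setminus A$; it lies in $B\cap C$. Take a small neighborhood ball $U$ around $p$ disjoint from $A$, and let $L$ be the component of $B\cap \cl(U')$ containing $p$, where $U'$ is a bounded slab. Then $L$ is a compact continuum in $C$. Its image $\tau(L)$ is a subcontinuum of $X$.

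Now choose a bounded subcontinuum $M\subseteq C$ meeting both $A\setminus B$ and $B\setminus A$. Such an $M$ exists by boundary bumping applied to $C$ exhausted by compact pieces, since $C$ is connected and meets both sets. Taking $M$ large enough, $M\cap A$ and $M\cap B$ have components $M_A\ni a$ and $M_B\ni b$, with $a\in A\setminus B$ and $b\in B\setminus A$, and both of these components reach the boundary of a slab. By compactness we can arrange $M=M_A\cup M_B$ with neither containing the other.

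This gives a decomposable compact continuum $M$ in $\Sigma$. Its image $\tau(M)$ is a subcontinuum of $X$, hence indecomposable, and $\tau(M)=\tau(M_A)\cup\tau(M_B)$. So one of the pieces, say $\tau(M_A)$, equals $\tau(M)$. Since $\tau$ is locally injective but not injective, this does not immediately give a contradiction. Here I would instead use the hereditary indecomposability characterization that any two intersecting subcontinua are nested. Subcontinua of $C$ of diameter less than $1$ in the $\mathbb{R}$-direction project injectively, so they are hereditarily indecomposable. For large continua, I would use the lifting of nestedness: if $P,Q\subseteq C$ are compact continua meeting, then $\tau(P)$ and $\tau(Q)$ are nested, say $\tau(P)\subseteq\tau(Q)$. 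Lifting along $Q$ via the unique path-free lifting of continua (the component of $\tau^{-1}(\tau(Q))$ containing $Q$, which meets $P$) then gives $P\subseteq Q$, after checking that $P$ lies in the component of $\tau^{-1}(\tau Q)$ containing $Q$.

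This nested-lifting step is the main obstacle. The difficulty is that the component of $\tau^{-1}(\tau(Q))$ containing $Q$ may be strictly larger than $Q$. I would handle it by confluence: $\tau|$ restricted to that component is confluent onto the hereditarily indecomposable continuum $\tau(Q)$. So $Q$ is a component of the preimage of a subcontinuum, and $P$ is a component of the preimage of $\tau(P)\subseteq\tau(Q)$, which yields $P\subseteq Q$. Once $C$ is shown to be hereditarily indecomposable in this sense, $M_A\cap M_B\ne\emptyset$ forces $M_A\subseteq M_B$ or $M_B\subseteq M_A$, contradicting the choice of $a$ and $b$. Hence $K$ is indecomposable.
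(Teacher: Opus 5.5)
There is a genuine gap where you produce a compact decomposable continuum $M\subseteq C$ meeting $A\setminus B$ and $B\setminus A$. Your justification (connectedness of $C$ plus boundary bumping) uses nothing about $\tau$, and at that level of generality the claim is false when $\infty\in A\cap B$.

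For a counterexample, glue two pseudo-arcs $A$ and $B$ along a common proper nondegenerate subcontinuum $D$, and put $\infty\in D$. Then $C=K\setminus\{\infty\}$ is connected. But any continuum $M\subseteq K$ meeting both $A\setminus B$ and $B\setminus A$ contains $\infty$. Indeed, the component $M'$ of $M\cap A$ through a point of $A\setminus D$ meets $\cl(M\setminus A)\subseteq B$, hence meets $D$, so $D\subseteq M'$ by hereditary indecomposability of $A$. Thus every compact subcontinuum of $C$ lies in $A$ or in $B$ and is hereditarily indecomposable, yet $K$ is decomposable. So hereditary indecomposability of compact subcontinua of $C$ cannot by itself yield the lemma, and your step ``arrange $M=M_A\cup M_B$ with neither containing the other'' has nothing to work with.

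Reduction to a compact decomposable continuum succeeds only when $\infty$ lies in exactly one of $A,B$; this is how the paper proves $\infty\in A\cap B$. The case $\infty\in A\cap B$ is the heart of the paper's proof, and it uses the covering globally:
\begin{enumerate}
\item Take $p\neq\infty$ in $A\cap B$ on the boundary of $A\setminus(A\cap B)$, and a neighborhood $U$ of $p$ on which $\tau$ is a homeomorphism.
\item Take an order arc $\{A_t\}$ in $A\cap B$ from $\{p\}$, with $A_1$ the first member containing $\infty$.
\item If $\cl\bigl(\bigcup_{t<1}\tau(A_t)\bigr)$ were a proper subcontinuum of $X$, its lift through $p$ would be a compact continuum containing every $A_t$, hence containing $A_1\ni\infty$, which is absurd.
\item So $\bigcup_{t<1}\tau(A_t)$ is dense in $X$, and this is played against $U\setminus(A\cap B)\neq\emptyset$.
\end{enumerate}
Your proposal has no substitute for this step.

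Secondarily, your nested-lifting argument does not establish hereditary indecomposability of compact subcontinua of $C$. Confluence says that components of $\tau^{-1}(\tau(Q))$ map onto $\tau(Q)$. It does not say that $Q$ or $P$ is such a component, which is exactly the obstacle you flagged. The paper does not prove this fact; it cites Rogers' Theorem~10. It then derives injectivity of $\tau$ on compact continua in $K$: a deck translation $\sigma$ with $A\cap\sigma(A)\neq\emptyset$ would either make $A\cup\sigma(A)$ decomposable or force $\sigma^n(A)\subseteq A$ for all $n$ of one sign. That injectivity is what makes lifting proper subcontinua of $X$ through $p$ available.

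Your unboundedness step reaches the right conclusion but should be argued directly. A compact component has arbitrarily small compact-open neighborhoods in $\tau^{-1}(X)$, whose images are clopen in $X$, so it maps onto $X$. It is also disjoint from its nontrivial deck translates, so $\tau$ is injective on it. Hence $X$ would lift and be inessential.
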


\begin{proof}
Let $X$ be given as in Lemma \ref{component}, $K$ be such a component and $K_\infty$ be the closure of $K$ in $\Sigma_\infty$. Note that proper subcontinua of $K$ are indecomposable by Theorem 10 in \cite{Rogers3}\footnote{Theorem 10 in \cite{Rogers3} is formulated for the case where $X$ is 1-dimensional and $\tau$ is a universal covering map of a cube with handles; however, the proof easily extends to the general case, which is later observed in \cite{Rogers3} on p.426, and used in \cite{Rogers2}.}. 
\begin{claim}\label{injective}
$\tau|A$ is injective for any continuum $A\subset K$. 
\end{claim} 
\begin{proof}(of Claim \ref{injective})
Indeed, if there were $x,y\in A$ such that $\tau(x)=\tau(y)$ then there would exist a deck transformation\footnote{i.e. $\tau\circ\sigma=\tau$} $\sigma: \mathbb{R}\times I^\infty \to \mathbb{R}\times I^\infty$ such that $\sigma(x)=y$. Note that $\sigma$ is of the form $\sigma(t,x)=(t+j,x)$, for any $(t,x)\in\mathbb{R}\times I^\infty$ and some $j\in\mathbb{Z}$. Consequently, one of the following must hold:
\begin{itemize}
\item[(1)] $\sigma(A)\setminus A\neq \emptyset$ and $A\setminus \sigma(A)\neq \emptyset$, or
\item[(2)] $\sigma (A)\subset A$, or
\item[(3)] $A\subset \sigma(A)$. 
\end{itemize}
In the first case, we get a decomposable subcontinuum $\sigma(A)\cup A$ of $K$, which is in contradiction with the hereditary indecomposability of $K$. In the second case, we would get $\sigma^n(A)\subset A$ for all $n\in\mathbb{N}$, so $A$ is unbounded, leading to a contradiction with compactness of $A$. The same contradiction we get in the third case, by the fact that $\sigma^n(A)\subset A$ for all $-n\in\mathbb{N}$. This shows that $\tau|A$ is injective and completes the proof of Claim \ref{injective}. 
\end{proof}
By the above claim and confluency of $\tau$, it follows that if $p \in K$ and $M$ is a proper subcontinuum of $\tau(K)$ containing $\tau (p)$ then there is a subcontinuum $\hat M \subset K$ containing $p$ so that $\tau (\hat M) = M$.

\begin{claim}\label{connected} Suppose that $A \cup B$ is a decomposable subcontinuum of $K_\infty$. Then $A \cap B$ is connected. In particular, $K_\infty$ is unicoherent.
\end{claim}
\begin{proof}(of Claim \ref{connected}) Suppose $A \cap B \subset K_\infty$ is the union of two disjoint compact sets $E$ and $F$. Without loss of generality, we have $\infty \notin E$. Let $U$ be an open set containing $E$ whose closure does not contain
$\infty$ and does not intersect $F$ . Let $C$ be a component of $A \cap \cl(U)$ intersecting $E$. Then $C$ is a continuum missing points of $B$ that intersects $B$ hence, by hereditary indecomposability, $C \subset B$. But $C$ contains points of $A$ that are not in $A \cap B$ (from the boundary of $U$, by the boundary bumping theorem), which is a contradiction, and the proof of Claim \ref{connected} is complete.
\end{proof}

\begin{claim}\label{infty} Suppose that $A \cup B$ is a decomposable subcontinuum of $K_\infty$. Then $\infty \in A \cap B$.
\end{claim}

\begin{proof}(of Claim \ref{infty}) Suppose $\infty \notin A \cap B$. Without loss of generality, assume $\infty \notin A$. As above, consider an open set $U$ containing $A$ whose closure misses $\infty$ and some point $x$ of $B$. Then let $C$ be the component of $(A \cup B) \cap \cl{U}$ containing $A$. Let $V$ be an open set whose closure misses $\infty$ and $x$; let $D$ be the component of $B \cap \overline{V}$ intersecting $A \cap B$. Then since (by Boundary Bumping Theorem) $D$ intersects $Bd(V)$, $D$ contains a point of $B$ not in $C$. So by hereditary indecomposability, since $D$ intersects $C$ we have $C \subset D$ but $D \subset B$ and $A \subset C$ so $A \subset B$. This contradiction completes the proof of Claim \ref{infty}.
\end{proof}

For the remainder, assume by contradiction that $K_\infty$ is decomposable into continua $A$ and $B$. We know that $\infty \in A \cap B$, and that $A \cap B$ is nondegenerate, since otherwise $K$ would be disconnected. Let $p \in A \cap B$ be such that $p \ne \infty$ and $p$ is a boundary point of $A \setminus A \cap B$. This point exists because otherwise $K$ would not be connected. Let $U$ be an open set containing $p$ such that there exists an open set $V$ containing $\tau (p)$ so that $\tau |_U$ is a homeomorphism onto $V$. Then $U \setminus A\cap B$ is a nonempty open set, by the choice of $p$.

By \cite[p. 112, Theorem 14.5]{Nadler} there exists an order arc $\alpha:[0,2]\to C(A\cap B)$ from $\{p\}$ to $A\cap B$ in the hyperspace of subcontinua of $A\cap B$, such that for each $t \in [0,2]$, $\alpha(t) = A_t$ is a subcontinuum of $A\cap B$, and $A_t \subset A_s$ for $t < s$, with $A_0 = \{p\}$ and $A_2=A\cap B$. Note that $\infty\in A_2$ and the set $\{t\in[0,2]:\infty\in A_t\}$ is a closed subinterval of $[0,2]$ (otherwise there would exist a $t'$ such that $A_{t'}=\cap_{t>t'}A_t$ with $\infty\notin A_{t'}$, but $\infty\in A_t$ for $t>t'$). Therefore, there exists a $t_0\in[0,2]$ such that $t_0=\min\{t\in[0,2]:\infty\in A_t\}$, and without loss of generality we can assume that $t_0=1$. 

Consider $M$, the closure of $\cup_{t < 1} \tau (A_t)$; if this is a proper subcontinuum $M$ of $X$ then there is a subcontinuum $\hat M$ of $K$ containing $p$ and mapping onto $M$; but then $\hat M = A_1$ (since by hereditary indecomposability of $X$ each $\tau (A_t) \subset M$) and $\infty \notin \hat M$ which is a contradiction. So $\cup_{t < 1} \tau ( A_t)$ must be dense in $X$ (for otherwise its closure would be a proper subcontinuum of $\tau(K)$). Consequently, there is an $r<1$ so that $\tau (A_r)$ intersects the open set $\tau (U - A\cap B)\subset V$. But this is a contradiction, since $A_r\subset A\cap B$ for all $r$ and $\tau|U$ is a homeomorphism. The proof of Lemma \ref{component} is complete.
\end{proof}
\section{Continuous Homogeneity Criterion}
Before we proceed to the proof of Theorem \ref{main} we make the following observations.
\begin{proposition}\label{composants}
Suppose $X$ and $Y$ are connected sets. Let $f:X\to Y$ be a confluent surjection. If $C$ is a composant of $X$ then either $f(C)=Y$ or $f(C)$ is a composant of $Y$. 
\end{proposition}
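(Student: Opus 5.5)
The plan is to show two inclusions under the assumption $f(C)\neq Y$: first that $f(C)$ lies inside a single composant of $Y$, and then that it is all of that composant. Fix $x\in C$ and let $D$ be the composant of $f(x)$ in $Y$. Note that composants of connected sets are defined as in the Remark, via proper subcontinua, so everything below only uses compact connected subsets and confluence.

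\emph{Step 1: $f(C)\subseteq D$ unless $f(C)=Y$.} Take $c\in C$. By definition there is a proper subcontinuum $Z\subsetneq X$ with $x,c\in Z$. Then $f(Z)$ is a continuum containing $f(x)$ and $f(c)$. If $f(Z)\neq Y$, then $f(c)\in D$. The only bad case is when some such $Z$ maps onto $Y$, and then $f(C)\supseteq f(Z)=Y$, so $f(C)=Y$. Hence if $f(C)\neq Y$, every $f(c)$ lies in $D$.

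\emph{Step 2: $D\subseteq f(C)$, using confluence.} Let $d\in D$, so there is a proper subcontinuum $K\subsetneq Y$ with $f(x),d\in K$. Let $Q$ be the component of $f^{-1}(K)$ containing $x$. By confluence, $f(Q)=K$. Since $K\neq Y$ and $f$ is surjective, $f^{-1}(K)\neq X$, so $Q\neq X$. We also need $Q$ to be a continuum, so that it witnesses membership in the composant. When $X$ is compact, $f^{-1}(K)$ is closed in a compact space and its components are continua. Thus $Q$ is a proper subcontinuum of $X$ containing $x$, so $Q\subseteq C$. Then $d\in K=f(Q)\subseteq f(C)$.

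\emph{Main obstacle.} The delicate point is Step 2 when $X$ is merely a connected set (for instance a component of $\tau^{-1}(X)$ as in Lemma \ref{component}): components of $f^{-1}(K)$ need not be compact. I would handle this in one of two ways. One option is to interpret confluence there so that the component $Q$ is compact, since in the intended applications $f$ is a lift of a map between continua and $\tau$ is injective on continua by Claim \ref{injective}, which gives compact preimage components. The other option is to observe that the composant definition only needs $Q$ to be a proper closed connected subset, and to adopt that reading per the Remark. With either reading the two inclusions give $f(C)=D$, a composant of $Y$, whenever $f(C)\neq Y$.
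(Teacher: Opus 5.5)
For compact $X$ your argument is correct and is the paper's. Step 2 is exactly its proof: confluence applied to a proper subcontinuum $K\ni f(x),d$ gives a proper subcontinuum $Q\ni x$ of $X$ with $f(Q)=K$. Your Step 1 supplies the inclusion $f(C)\subseteq D$, which the paper never states even though it is needed to conclude that $f(C)$ \emph{is} the composant $D$.

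One small repair: choose $x$ to be a point whose composant is $C$, not an arbitrary point of $C$. Both of your steps use that $C$ is the composant of $x$, and in decomposable continua composants overlap. For instance, $\tfrac12$ lies in $[0,1)$, the composant of $0$ in $[0,1]$, while the composant of $\tfrac12$ is all of $[0,1]$.

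The difficulty you isolate for non-compact $X$ is genuine. Neither of your workarounds is legitimate, and no workaround can exist, because the proposition is false in that generality. Take $X=\{(t,\sin(1/t)):0<t\le 1\}\cup\{(0,0)\}$, $Y=[0,1]$ and $f(t,s)=t$.
\begin{itemize}
\item The preimage of every subcontinuum of $[0,1]$ is connected, so $f$ is a confluent surjection (indeed a bijection).
\item Since $f$ is injective, a subcontinuum of $X$ containing $(0,0)$ and another point would have to equal $f^{-1}([0,a])$ for some $a>0$. That set is not compact, so the composant of $(0,0)$ in $X$ is $\{(0,0)\}$.
\item Its image $\{0\}$ is neither $Y$ nor a composant of $[0,1]$; those are $[0,1)$, $(0,1]$ and $[0,1]$.
\end{itemize}

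Your second option is not ``per the Remark''. The Remark keeps proper \emph{subcontinua}, so switching to proper closed connected subsets changes the notion of composant and hence the statement. Step 1 would also need reworking, since $f(Z)$ need no longer be closed.

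Your first option misreads how the proposition is used. It is applied only in Claim \ref{notimage}, to $f|Z:Z\to Z$ with $Z$ a continuum, not to a lift.

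The correct resolution is to state and prove the proposition for continua, which covers everything the paper needs. The paper's own proof tacitly assumes this when it calls the component $J$ ``the continuum $J$''.
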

\begin{proof}
Suppose $f(C)\neq Y$ and $p\in C$. Let $D$ be the composant of $Y$ that contains $f(p)$. Consider any other point $y$ of $D$. There is a proper subcontinuum $I$ of $Y$ containing $f(p)$ and $y$. By the confluence of $f$ the component $J$ of the preimage of $I$ containing $p$ is mapped onto $I$. Since $C$ does not map onto all of $Y$, the continuum $J$ is a proper subcontinuum of $C$, so $y$ is in the image of $C$. Consequently $C$ maps onto the composant $D$.
\end{proof}
\begin{remark}\label{CY}
Note that in the setting of Proposition \ref{composants}, if $f(C)=Y$ and $Y$ has at least 2 pairwise disjoint composants, then there exists a connected proper subset of $W\subset C$ such that $f(W)=Y$. Moreover, if $Y$ is compact then $W$ is a continuum. Indeed, since $f(C) = Y$ there exists a point $q\in C$ such that $f(q)\notin D$.  Since $C$ is a composant there is a proper subcontinuum $W$ of $C$ containing $p$ and $q$.  So $W$ has to map onto $Y$.
\end{remark}
\begin{lemma}\label{tnont}
No hereditarily indecomposable continuum with a trivial 1-st \v Cech cohomology group can be mapped onto a hereditarily indecomposable continuum with a nontrivial 1-st \v Cech cohomology group. 
\end{lemma}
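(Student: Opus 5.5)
The plan is to argue by contradiction, using the covering space $\Sigma$ and the injectivity argument of Claim \ref{injective}. Suppose $X$ is hereditarily indecomposable with $\check H^1(X)=0$, and $f:X\to Y$ is a continuous surjection onto a hereditarily indecomposable continuum $Y$ with $\check H^1(Y)\neq 0$.

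\emph{Step 1 (an essential embedding of $Y$).} By the Bruschlinsky--Eilenberg correspondence, $\check H^1(Y)\cong[Y,\mathbb{S}^1]$. So nontriviality of $\check H^1(Y)$ gives an essential map $g:Y\to\mathbb{S}^1$. Fix an embedding $e:Y\to I^\infty$ and put $h=(g,e):Y\to \mathbb{S}^1\times I^\infty$. This is an embedding whose composition with the projection to $\mathbb{S}^1$ is $g$, so $Y$ embeds essentially. From now on we identify $Y$ with $h(Y)$.

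\emph{Step 2 (lifting $f$).} Since $\check H^1(X)=0$, the map $g\circ f:X\to\mathbb{S}^1$ is inessential, so it has a lift $\phi:X\to\mathbb{R}$ with $\exp\circ\phi=g\circ f$. Define
\[
\tilde f=(\phi,\,e\circ f):X\to\Sigma .
\]
Then $\tau\circ\tilde f=h\circ f$. The set $A=\tilde f(X)$ is a (compact) subcontinuum of $\tau^{-1}(Y)$, hence lies in one component $K$ of $\tau^{-1}(Y)$, and $\tau(A)=Y$.

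\emph{Step 3 ($\tau|A$ is injective).} By the footnoted extension of Rogers' Theorem 10 from \cite{Rogers3}, as used in the proof of Lemma \ref{component}, proper subcontinua of $\tau^{-1}(Y)$ (compact subcontinua of $K$) are hereditarily indecomposable. Repeat the proof of Claim \ref{injective}. If $\tau(x)=\tau(y)$ with $x\neq y$ in $A$, there is a deck translation $\sigma$ with $\sigma(x)=y$. Then either $A\cup\sigma(A)$ is a decomposable continuum in $K$, or $\sigma^{\pm n}(A)\subset A$ for all $n$, contradicting boundedness of $A$. Hence $\tau|A:A\to Y$ is a continuous bijection of compacta, i.e. a homeomorphism.

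\emph{Step 4 (contradiction).} Let $\pi:\Sigma\to\mathbb{R}$ be the projection. Then on $A$ we have $\exp\circ\pi = g\circ\tau$. Therefore
\[
g=\exp\circ\pi\circ(\tau|A)^{-1}
\]
factors through $\mathbb{R}$, so $g$ is inessential. This contradicts the choice of $g$.

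The step I expect to need the most care is Step 3. One must check that Rogers' result really gives hereditary indecomposability of compact subcontinua of $\tau^{-1}(Y)$ in the setting of an arbitrary essential embedding into $\mathbb{S}^1\times I^\infty$. This is exactly what the footnote in the proof of Lemma \ref{component} asserts. Everything else is formal covering-space theory. Note that $f$ need not be injective for this argument; only the compactness and connectedness of $\tilde f(X)$ are used.
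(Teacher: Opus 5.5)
Your proof is correct, but it does not follow the paper's route. The paper settles the lemma in three lines using two external results. First, every map onto a hereditarily indecomposable continuum is confluent (Rogers' Lemma 15). Second, by Lelek a confluent surjection induces a monomorphism $\check H^1(Y)\to\check H^1(X)$, which is impossible when $\check H^1(X)=0\neq\check H^1(Y)$.

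You argue through the cyclic cover instead. Triviality of $\check H^1(X)$ lets you lift $f$, which gives a compact continuum $A\subset\tau^{-1}(Y)$ with $\tau(A)=Y$. The deck-translation argument of Claim~\ref{injective} then forces $\tau|A$ to be a homeomorphism, so the essential map $g$ would factor through $\mathbb{R}$. In effect you reprove the special case of Lelek's theorem that is needed here (essential maps to $\mathbb{S}^1$, hereditarily indecomposable target), using only the machinery already deployed in Lemma~\ref{component}.

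The trade-off is this. Your proof depends on the extension of Rogers' Theorem~10 to arbitrary embeddings in $\mathbb{S}^1\times I^\infty$, which the paper only asserts in a footnote. The paper's proof of this lemma avoids covering spaces altogether and rests on a result valid for all confluent maps.

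Two further points. Neither proof uses hereditary indecomposability of $X$, only $\check H^1(X)=0$. Also, your Steps 3--4 yield something extra. An essentially embedded hereditarily indecomposable continuum admits no continuum $X^*\subset\Sigma$ with $\tau(X^*)=X$, so it is not finitely generated in the sense defined at the end of the paper.
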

\begin{proof}
The proof is identical to that of \cite[Theorem 16]{RogersIll}. Namely, let $X$ and $Y$ be hereditarily indecomposable, with $\check H^{1}(X;\mathbb{Z})=0$ and ${\check{H}}^{1}(Y;\mathbb{Z})\neq 0$. By contradiction, suppose there exists a surjective map $f:X\to Y$. By the result of Rogers \cite[Lemma 15]{RogersIll}, $f$ is confluent. By a result of Lelek \cite[Corollary 2]{Lelek}, $f$ induces a monomorphism $\check f:{\check{H}}^{1}(Y;\mathbb{Z})\to{\check{H}}^{1}(X;\mathbb{Z})$. Since ${\check{H}}^{1}(X;\mathbb{Z})$ is trivial, and ${\check{H}}^{1}(Y;\mathbb{Z})$ is not, we obtain a contradiction. 
\end{proof}
\begin{lemma}(Continuous Homogeneity Criterion)\label{cfcn}
Let $X$ be a hereditarily indecomposable continuum. Suppose that $X$ embeds essentially into $\mathbb{S}^1\times I^\infty$. Then $X$ is not continuously homogeneous. 
\end{lemma}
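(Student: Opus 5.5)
The plan is to follow the Kuperberg--Gammon strategy for the pseudo-circle: find a topological property of points of $X$ that is read off in the universal cover and is preserved by lifts of surjective self-maps. Then show that some points of $X$ have it and some do not. Since a continuous surjection carrying a point with the property to one without it would be impossible, $X$ cannot be continuously homogeneous.

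\emph{Setup.} Let $X\subset \mathbb{S}^1\times I^\infty$ be essentially embedded, let $K$ be a component of $\tau^{-1}(X)$, and let $K_\infty$ be its closure in $\Sigma_\infty$. By Lemma~\ref{component}, $K_\infty$ is indecomposable, so it has uncountably many pairwise disjoint composants. Let $\kappa(K)$ denote the composant of $\infty$ in $K_\infty$. Composants not containing $\infty$ are unions of compact subcontinua of $K$. A point $p\in\kappa(K)\setminus\{\infty\}$, by contrast, lies in a proper subcontinuum $L\ni\infty$ of $K_\infty$.

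Call $z\in X$ \emph{remote} if some lift $\tilde z\in\tau^{-1}(z)$ lies in $\kappa(K')\setminus\{\infty\}$, where $K'$ is the component of $\tau^{-1}(X)$ containing $\tilde z$. Deck transformations preserve each $\kappa(K')$ (they fix $\infty$), so for a given component this does not depend on the choice of lift. I will show that the set of remote points is of first category in $X$ while its complement is nonempty. For this I would use that composants of an indecomposable continuum are first category, and that each component, on which $\tau$ is locally injective by Claim~\ref{injective}, contributes only a meagre set. I expect to need care if there are uncountably many components, as in Example~\ref{bco}; there I would restrict to the components through which $X$ carries the essential cohomology.

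\emph{Lifting.} Let $f:X\to X$ be a surjection. By \cite[Lemma 15]{RogersIll}, $f$ is confluent. By Lelek's result used in Lemma~\ref{tnont}, $\check f$ is injective on $\check H^1$. Hence the essential class is pulled back to a nonzero class, and the lift $g:\tau^{-1}(X)\to\tau^{-1}(X)$ satisfies $g(\sigma(w))=\sigma^d(g(w))$ for the generating deck transformation $\sigma$ and some degree $d\neq0$. Consequently $g$ is proper, and it extends continuously to $\Sigma_\infty$ with $g(\infty)=\infty$. Lifts of confluent maps are confluent on proper subcontinua, by Claim~\ref{injective}. Compact subcontinua of $K$ map to compact subcontinua avoiding $\infty$. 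Unbounded subcontinua $L\ni\infty$ map to continua containing $\infty$.

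\emph{Monotonicity of the property.} Take $x$ remote and $y$ not remote, and suppose $f(x)=y$. Choose a lift $p$ of $x$ in $\kappa(K)$ and $L\ni p,\infty$ a proper subcontinuum of $K_\infty$. Then $g(L)$ is a subcontinuum of $K'_\infty$ containing $g(p)$ and $\infty$. If $g(L)\ne K'_\infty$, then $g(p)\in\kappa(K')$, so $y$ is remote, a contradiction. The remaining case is the main obstacle: a proper subcontinuum $L$ through $\infty$ might map onto all of $K'_\infty$, which is the situation of Remark~\ref{CY}.

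To handle it, I would first choose $L$ minimal. Using an order arc from $\{\infty\}$ as in the proof of Lemma~\ref{component}, I would take $L$ to be the first continuum in the arc containing $p$. I would then run the density argument of that proof. If $g(L)=K'_\infty$, then $\tau(g(L_t))$ for $L_t\subsetneq L$ would already have to be dense in $X$, while staying inside a small evenly covered neighbourhood; this should give a contradiction as before. If this fails, the fallback is to swap the roles of the two kinds of points. A non-remote $y$ has all lifts in compact subcontinua, so $f(y)$ for \emph{every} $f$ has lifts in $g(\text{compact})$, which is compact. Then I would aim to show that remote points can never be images of non-remote points by exhibiting a remote point whose lifts lie in no compact subcontinuum. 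That yields the required pair $x,y$ with no surjection sending $y$ to $x$.
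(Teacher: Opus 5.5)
Your preliminary steps are sound. Remoteness is well defined because deck transformations fix $\infty$ and so carry composants of $\infty$ to composants of $\infty$. Your degree argument ($f^*$ injective on $\check H^1$ forces $d\neq 0$, so the lift is proper and extends with $g(\infty)=\infty$) is more explicit than the paper's, and the easy case of your monotonicity step is exactly the paper's.

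There is, however, a genuine gap at the case you flag yourself: a proper subcontinuum $L\ni p,\infty$ of $K_\infty$ with $g(L)=K'_\infty$. Neither remedy closes it.
\begin{itemize}
\item Choosing $L$ first along an order arc from $\{\infty\}$ does nothing to prevent $g(L)=K'_\infty$.
\item The density argument of Lemma~\ref{component} does not transfer. There the continua $A_t$ were trapped in $A\cap B$ of a hypothetical decomposition and so missed a fixed open set. Your $L_t$ contain $\infty$, and there is no reason for $\tau(g(L_t))$ to be dense or to stay in an evenly covered neighbourhood.
\item The fallback is vacuous. Every point of $K$ lies in a compact subcontinuum of $K$, and a compact subcontinuum meeting $\kappa(K)$ lies in $\kappa(K)$ by indecomposability of $K_\infty$. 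So ``has a lift in a compact subcontinuum'' does not separate remote from non-remote points, and the remote point you want to exhibit does not exist.
\end{itemize}
Nothing in your setup excludes the bad case. Lemma~\ref{tnont} only forbids continua with trivial $\check H^1$ from mapping onto ones with nontrivial $\check H^1$, and you work with all of $X$. Your first-category claim for remote points is also left unproved.

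The missing idea is to replace $X$ by a minimal subcontinuum $Z\subseteq X$ with nontrivial $\check H^1$, so that every proper subcontinuum of $Z$ is acyclic. Lemma~\ref{tnont} then does the work. If $x,y\in Z$ and $f(x)=y$, hereditary indecomposability gives $f(Z)\subseteq Z$, and then $f(Z)=Z$. Since no proper subcontinuum of $Z$ can map onto $Z$, Proposition~\ref{composants} and Remark~\ref{CY} show that $f$ maps each composant of $Z$ onto a single composant of $Z$.

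One then works in a component $K$ of $\tau^{-1}(Z)$. Take $\tilde x\in C_\infty\setminus\{\infty\}$, where $C_\infty$ is the composant of $\infty$ in $K_\infty$, and take $\tilde y$ in another composant $\tilde C$. Deck invariance of $C_\infty$ plus acyclicity of proper subcontinua of $Z$ show that $x=\tau(\tilde x)$ and $y=\tau(\tilde y)$ lie in different composants of $Z$. If $f(x)=y$, the lift with $\tilde f(\tilde x)=\tilde y$ maps $K_\infty$ into itself. A proper subcontinuum $\tilde W\ni\infty,\tilde x$ of $K_\infty$ is then forced to satisfy $\tilde f(\tilde W)=K_\infty$, which is precisely your obstacle. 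This makes the image of the composant of $x$ meet two composants of $Z$, contradicting composant preservation. So the paper does not try to rule out the bad case: after passing to $Z$, the bad case is the contradiction.
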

\begin{proof}(of Lemma \ref{cfcn})
Suppose $f: X\to X$ is a surjective map. Since $X$ is a closed subset of the ANR $\mathbb{S}^1\times I^\infty$, $f$ extends to a map $f: \mathbb{S}^1\times I^\infty\to \mathbb{S}^1\times I^\infty$, by first extending to the closure of an open neighborhood $U$, and then composing with a retraction from $\mathbb{S}^1\times I^\infty$ onto $\cl(U)$. Consequently $f$ has a lift $\tilde{f}:\Sigma\to \Sigma$; i.e. $\tau\circ \tilde{f}=f\circ \tau $. Since $f|X$ is a surjection, by the result of Lelek, it induces an automorphism of the 1-st \v Cech cohomology group of $X$, and consequently $\tau^{-1}(f(X))$ is unbounded in $\Sigma$. But $\tau^{-1}(f(X))=\tilde{f}(\tau^{-1}(X))$, hence $\tilde{f}$ extends to $\Sigma_\infty$ by setting $\tilde f(\infty)=\infty$.

Let $Z\subset X$ be a minimal (with respect to the inclusion) continuum with nontrivial 1-st \v Cech cohomology; i.e. any subcontinuum $T\subset Z$ has trivial 1-st \v Cech cohomology. Let $K$ be a component of $\tau^{-1}(Z)$ and $K_\infty$ be the closure of $K$ in $\Sigma_\infty$. Since $K_\infty$ is indecomposable by Lemma \ref{component}, it has uncountably many pairwise disjoint composants. Suppose that $\tilde{x},\tilde{y}\in K$, and that for $x=\tau(\tilde{x})$ and $y=\tau(\tilde{y})$ we have $f(x)=y$. 
\begin{claim}\label{sub}
$f(Z)\subset Z$
\end{claim}
 \begin{proof}(of Claim \ref{sub})
 By the fact that $f(Z)\cap Z\neq\emptyset$ and hereditary indecomposability of $X$ we must have that $f(Z)\subset Z$ or $Z\subset f(Z)$. If it were true that $Z\subsetneq f(Z)$, then there would have been a subcontinuum $N\subsetneq Z$ with $f(N)=Z$, contradicting Lemma \ref{tnont}.      
 \end{proof}
 By Map Lifting Property, we can choose $\tilde{f}$ to be the unique lift satisfying $\tilde{f}(\tilde{x
})=\tilde{y}$. Then $\tilde f(K)=K$ since $\tilde f(K)\cap K\neq\emptyset$ and $K$ is a component of $\tau^{-1}(Z)$. It follows that $\tilde f(K_\infty)=K_\infty$.
\begin{claim}\label{Z}
 $f(Z)=Z$   
\end{claim}
\begin{proof}(of Claim \ref{Z})
 We have already shown that $f(Z)\subset Z$. If $f(Z)\subsetneq Z$ then $f(Z)$ has trivial 1-st \v Cech cohomology and consequently $\tau^{-1}(f(Z))$ is a compact subset of $\Sigma$. However, $\tilde f(K_\infty)$ must be a subcontinuum of $\tau^{-1}(f(Z))$, contradicting $\infty \in \tilde f(K_\infty)$.    
\end{proof}
\begin{claim}\label{notimage}
If $C$ is a composant of $Z$ then $f(C)$ is also a composant of $Z$.  
\end{claim}
\begin{proof}(of Claim \ref{notimage})
By Proposition \ref{composants} either $f(C)$ is a composant or $f(C)=Z$. But by Remark \ref{CY}, the choice of $Z$ and Lemma \ref{tnont}, we get $f(C)\neq Z$, so $f(C)$ is a composant. 
\end{proof}

Now we shall make a more specific choice of $x,y\in Z$. Let $C_\infty$ be the composant of $\infty$ in $K_\infty$ and $\tilde{C}$ be another composant in $K_\infty$. Pick $\tilde{x}\in C_\infty$ and $\tilde{y}\in \tilde{C}$ and let $x=\tau(\tilde{x})$ and $y=\tau(\tilde{y})$.

\begin{claim}\label{empty}
$\tau(C_\infty\setminus\{\infty\})\cap\tau(\tilde C)=\emptyset$.
\end{claim}
\begin{proof}(of Claim \ref{empty})
As noted in \cite[proof of Theorem 2]{KuperbergGammon}, if for some $v$, the composant $C_\infty$ intersects the fiber $\tau^{-1}(v)$, then $\tau^{-1}(v)\subset C_\infty$, since each deck transformation extends to $\Sigma_\infty$ by letting $\infty$ to be a fixed point. If there exists a $z\in \tau(C_\infty\setminus\{\infty\})\cap\tau(\tilde C)$ then $\tau^{-1}(z)\subset C_\infty$ and $\tau^{-1}(z)\cap \tilde C\neq\emptyset$. But $C_\infty\cap\tilde C=\emptyset$, which results in a contradiction.
\end{proof}
\begin{claim}\label{lie}
$\tau(C_\infty\setminus\{\infty\})$ and $\tau(\tilde C)$ lie in different composants of $Z$.
\end{claim}
\begin{proof}(of Claim \ref{lie})
By contradiction, suppose that there exists a proper subcontinuum $P\subset Z$ that intersects both $\tau(C_\infty\setminus\{\infty\})$ and $\tau(\tilde C)$. Since $P$ must have a trivial 1-st \v Cech cohomology group, and $\tau^{-1}(P)\cap C_\infty\neq\emptyset$, by the argument given for fibers of points in the proof of Claim \ref{empty}, we must have $\tau^{-1}(P)\subset C_\infty$. This is a contradiction with the fact that $\tau^{-1}(P)$ must also intersect $\tilde C$.
\end{proof}
To complete the proof of Lemma \ref{cfcn}, let us assume that $f(x)=y$ and let $\tilde{f}$ be the unique lift with $\tilde{f}(\tilde{x})=\tilde{y}$. We let $\tilde{W}\subset C_\infty$ to be a proper subcontinuum that joins $\infty$ and $\tilde{x}$. Then $\tilde{f}(\tilde{W})$ is a subcontinuum of $K_\infty$ joining $\infty$ and $\tilde{y}$. Since $\tilde{y}\notin C_\infty$ it follows that $\tilde{f}(\tilde{W})=K_\infty$. Consequently $\tilde{f}(C_\infty\setminus\{\infty\})=K$. Therefore, if $C_x$ and $C_y$ are composants in $Z$ of $x$ and $y$ respectively, then $f(C_x)$ intersects both $C_x$ and $C_y$. But this is a contradiction with Claim \ref{notimage}.
This contradiction completes the proof of Lemma \ref{cfcn}.
\end{proof}
Now we are in the position to prove that continuously homogeneous hereditarily indecomposable continua are tree-like. 
\begin{proof}(of Theorem \ref{main})
Let $X$ be a continuously homogeneous hereditarily indecomposable continuum and by contradiction suppose $X$ is not tree-like. Then there exists an essential map $\varphi:X\to\mathbb{S}^1$. Indeed, if $X$ is 1-dimensional then for the first \v Cech cohomology group with integral coefficients we have $H^1(X)\neq 0$, and this is equivalent to the existence of the essential map $\varphi$. If $X$ is of dimension at least $2$ then such a map exists by a result of Krasinkiewicz; see Corollary 4, p. 510 in \cite{Krasinkiewicz}. This map gives rise to an essential embedding of $X$ in $\mathbb{S}^1\times I^\infty$. Indeed, it is enough to consider the embedding $e:X\to \mathbb{S}^1\times I^\infty$ given by $e(t)=(\varphi(t),\phi(t))$, where $\phi:X\to I^\infty$ is an embedding of $X$ into $I^\infty$. Now we can apply Lemma \ref{component} and Lemma \ref{cfcn}. The proof is complete. 
\end{proof}
\begin{remark}
Note that if $X$ is tree-like, then there does not exist an essential map $\varphi:X\to\mathbb{S}^1$. In particular, if $X$ is embedded in $\mathbb{S}^1\times I^\infty$ then $\tau^{-1}(X)$ consists of countably many homeomorphic copies of $X$ in $\Sigma$, and the above proof is not applicable. 
\end{remark}
We conjecture that there is a large and quite natural class of continua that lack continuous homogeneity. Suppose $X$ is a continuum. $X$ is said to be \textit{finitely generated} (cf. \cite{JP}) if either, 
\begin{enumerate}
\item $X$ does not embed essentially into $\mathbb{S}^1\times I^\infty$, or
\item $X$ embeds essentially into $\mathbb{S}^1\times I^\infty$ and there exists a continuum $X^*$ in $\Sigma$ such that $\tau(X^*)=X$. 
\end{enumerate}
Note that if $f:\mathbb{S}^1\to\mathbb{S}^1$ has the property that for every open set $U$ there is an $m$ such that $f^m(U)=\mathbb{S}^1$ (i.e. $f$ is topologically exact) then $\lim_{\leftarrow}\{\mathbb{S}^1,f\}$ is not finitely generated. This follows from the proof of Theorem 4.3 in \cite{BO2}. Similarly, if a circle-like continuum is self-entwined then it is not finitely generated \cite{Rogers_self}.
\begin{conjecture}\label{fingen}
Any continuously homogeneous indecomposable planar continuum is finitely generated.
\end{conjecture}
Returning to hereditarily indecomposable continua, recall that Kennedy and Rogers \cite{KennedyRogers} showed that the action of the homeomorphism group of the pseudo-circle has uncountably many orbits, each of which is a union of uncountably many composants. Their result strenghtened the original proofs of nonhomogeneity of the pseudo-circle by Fearnley \cite{Fearnley} and Rogers \cite{Rogers}. Cook showed the existence of a hereditarily indecomposable continuum that admits the identity as the only homeomorphism \cite{Cook}; see also \cite{Pol}. Bing showed \cite{Bing} that in every hereditarily indecomposable $n$-dimensional continuum ($n>1$) there is a composant that contains only $n$-dimensional continua (see also \cite{PolRenska}). Apart from these results, the pseudo-arc and pseudo-circle, it seems that little is known about the degree of homogeneity of hereditarily indecomposable continua in general. In particular, the following question from \cite{Boronski} remains open. 
\begin{question}
For a hereditarily indecomposable continuum $X$ must the homeomorphism group $\operatorname{Homeo}(X)$ have either one or infinitely many orbits?
\end{question}
If there are infinitely many orbits, it is also unknown how many of them there are, even for the pseudo-circle; see \cite{KennedyRogers}.
\begin{question}
For a hereditarily indecomposable continuum $X$, if the homeomorphism group $\operatorname{Homeo}(X)$ has infinitely many orbits, must it have $2^{\aleph_0}$ many orbits? What if $X$ is the pseudo-circle?
\end{question}
\section{Acknowledgements}
This work was supported in part by National Science Centre, Poland (NCN), grant "Homogeneity and Minimality in Compact Spaces" no. 2015/19/D/ST1/01184. Part of the work was carried out during first author's visit in Erie, PA, in July of 2018, thanks to the subsidy for institutional development IRP201824 ``Complex topological structures'' from University of Ostrava. 
\bibliographystyle{plain}

\end{document}